\documentclass[a4paper]{article}

%% Language and font encodings
\usepackage[english]{babel}
\usepackage[utf8x]{inputenc}
\usepackage[T1]{fontenc}
\usepackage{graphicx}
\usepackage{caption, subcaption}
\usepackage[title]{appendix}
\usepackage{tikz}
\usepackage{pgfplots}
\usepackage{cutwin}
\graphicspath{ {images/} }
\usetikzlibrary{intersections,decorations.pathreplacing,decorations.markings,calc,angles,quotes,arrows.meta,pgfplots.fillbetween,patterns}

%% Sets page size and margins
\usepackage[a4paper,top=2.8cm,bottom=3cm,left=2.7cm,right=2.7cm,marginparwidth=1.75cm]{geometry}

%% Useful packages
\usepackage{amsmath,yhmath,amsfonts}
\usepackage{amsthm}
\usepackage{amssymb}
\usepackage[hidelinks]{hyperref}
\usepackage{cleveref}
\usepackage{epstopdf}
\usepackage{comment}
\usepackage{todonotes}
\usepackage{float}
\usepackage[sort&compress,numbers]{natbib}
\captionsetup{font={small,it}}

\newtheorem{thm}{Theorem}% 	 	[section]
\newtheorem{lemma}[thm]{Lemma}%[section]
%[section]

%[section]
\theoremstyle{definition}

\newtheorem{rmk}[thm]{Remark}%[section]
\numberwithin{equation}{section}
\numberwithin{thm}{section}

\newcommand{\C}{\mathcal{C}}
\newcommand{\D}{\partial D}
\renewcommand{\S}{\mathcal{S}}
\newcommand{\R}{\mathbb{R}}

\renewcommand{\i}{\mathrm{i}}
\newcommand{\de}{\,\mathrm{d}}
\newcommand{\ddp}[2]{\frac{\partial#1}{\partial#2}}
\renewcommand{\v}{\mathbf{v}}
\renewcommand{\u}{\mathbf{u}}

\renewcommand*{\Im}{\operatorname{Im}}

\makeatletter
\newcommand{\neutralize}[1]{\expandafter\let\csname c@#1\endcsname\count@}
\makeatother

\title{Landscape of wave focusing and localisation at low frequencies}
\author{ Bryn Davies\thanks{\footnotesize Department of Mathematics, Imperial College London, London, UK (email: bryn.davies@imperial.ac.uk).} \and Yiqi Lou\footnotemark[1]}
\date{}

\begin{document}
\maketitle
	
	\begin{abstract}
		High-contrast scattering problems are special among classical wave systems as they allow for strong wave focusing and localisation at low frequencies. We use an asymptotic framework to develop a landscape theory for high-contrast systems that resonate in a subwavelength regime. Our from-first-principles asymptotic analysis yields a characterisation in terms of the generalised capacitance matrix, giving a discrete approximation of the three-dimensional scattering problem. We develop landscape theory for the generalised capacitance matrix and use it to predict the positions of three-dimensional wave focusing and localisation in random and non-periodic systems of subwavelength resonators.
	\end{abstract}
	\vspace{0.5cm}
	\noindent{\textbf{Mathematics Subject Classification (MSC2010):} 35J05, 35C20, 35P20, 74A40, 78A48.
		
	\vspace{0.2cm}
	
	\noindent{\textbf{Keywords:}} metamaterials, capacitance coefficients, subwavelength resonance, Anderson localization, asymptotic analysis, energy focusing, landscape function
	\vspace{0.5cm}
	
\section{Introduction}

Wave propagation in random and complex media is often challenging to understand and can lead to a range of exotic behaviour. One interesting phenomenon that can occur in these systems is the tendency for waves to be focused and localised by the material. In extreme cases, this can amount to the complete absence of diffusion (known as \emph{Anderson localisation}) \cite{anderson1958absence}. This behaviour has been observed in a variety of systems, including electromagnetic \cite{laurent2007localized, schwartz2007transport, segev2013anderson} and acoustic \cite{condat1987observability, weaver1990anderson} waves, as well as quantum systems \cite{billy2008direct}. In spite of the extensive body of theory that was developed to describe this phenomenon during the second half of the twentieth century \cite{lagendijk2009fifty}, a notable question remained: given a random material, is it possible to predict \emph{where} wave energy will be localised? Landscape functions were developed to solve this problem \cite{filoche2012universal}.

The theory of landscape functions provides an upper bound on the eigenmodes of a differential problem \cite{filoche2012universal}. When this bound is tight, the eigenmodes are forced to inherit the shape of the landscape function. Hence, if the eigenmode in question is either exponentially localised or strongly focused (in the sense that it has a significant local maximum), then the landscape function can be used to predict the possible locations where it could be localised. This idea has been used to predict localisation in a range of continuous \cite{filoche2012universal} and discrete \cite{lyra2015dual} random systems. The link between localised modes and the landscape function has subsequently been made more precise, with \cite{arnold2019localization} showing how the landscape can be used to predict the support of the localised mode.

Crucially, the constant in the upper bound given by the landscape function scales in proportion to the eigenvalue of the mode in question \cite{filoche2012universal}. As a result, when it comes to predicting the specific regions of localisation in a random material, the method works well for the Schr\"odinger equation because the interesting localised modes typically occur at low frequencies. Conversely, in classical wave systems, the low-frequency modes are typically delocalised and any focusing or localisation occurs at higher frequencies, meaning little useful information can be deduced from the landscape function \cite{colas2022crossover}. An approach for defining a high-frequency landscape was developed for discrete systems by \cite{lyra2015dual}. For continuous classical wave systems, an alternative solution was proposed by \cite{colas2022crossover}, which relies on shifting the eigenvalue of the differential operator and optimising with respect to the energy shift, using the ideas from \cite{arnold2019localization} to define localisation regions. A summary of this approach can be found online at \cite{cottereau2023talk}.

In this work, we study the scattering of scalar waves by arrays of finitely many highly contrasting material inclusions. The very large material contrast means the inclusions act as strong scatterers and exhibit \emph{subwavelength resonance}. That is, resonance in a regime where the incident wavelength is much larger than the dimensions of the scatterers. As a result, these systems are special among classical wave systems as they allow for strong wave focusing and localisation at low frequencies. An example of a strongly focused subwavelength resonant mode is shown in \Cref{fig:modeexample1}, for a system of 15 randomly positioned spherical high-contrast material inclusions. In this case, the system does not support localisation since it has open boundary conditions and energy is allowed to escape to the fair field. Nevertheless, it is clear from \Cref{fig:modeexample1} that the resonant mode's profile, when restricted to a neighbourhood of the scatterers, has its amplitude strongly focused in a single region of three-dimensional space. The aim of this work is to use landscape functions to predict where this focusing will occur. As we will introduce in \Cref{sec:prelim}, the high-contrast regime considered in this work is motivated by the example of \emph{Minnaert resonance}, which is the deeply subwavelength acoustic resonance of an air bubble in water \cite{minnaert1933musical, devaud2008minnaert}.

\begin{figure}
	\centering
	\includegraphics[width=0.75\linewidth,clip,trim=1cm 0 6cm 0]{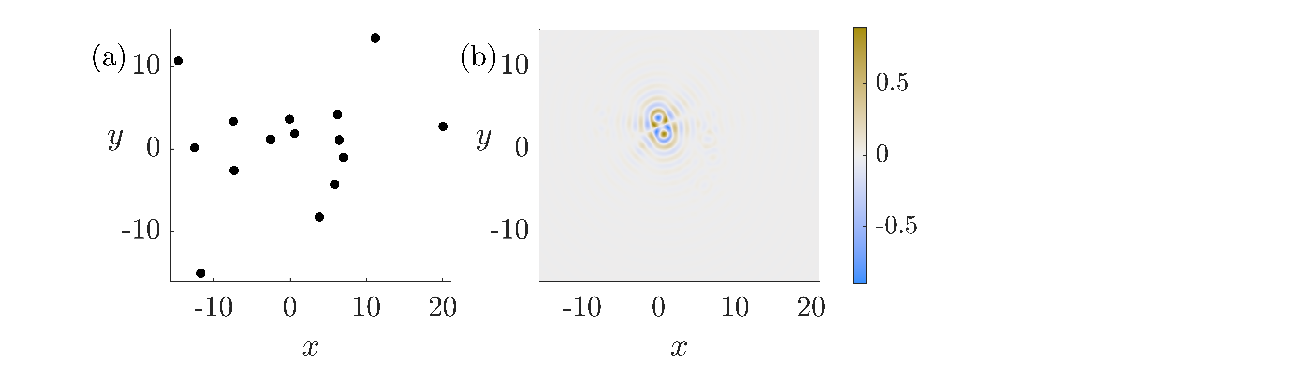}
	\caption{We develop a landscape theory to characterise low-frequency wave scattering by systems of high-contrast subwavelength resonators. (a) A randomly distributed array of 15 spherical subwavelength resonators. (b) One of the subwavelength resonant modes of the system, whose amplitude is focused in a region of three-dimensional space by the resonators.} \label{fig:modeexample1}
\end{figure}

The subwavelength resonance of systems of high-contrast inclusions can be characterised using asymptotic analysis. In particular, it has been shown that the resonant frequencies and associated resonant modes of these systems are given, at leading order, by the eigenvalues and eigenvectors of the \emph{generalised capacitance matrix} \cite{ammari2021functional, ammari2022wave}. This is a generalisation of the capacitance matrix that often appears in electrostatics, having been introduced by Maxwell \cite{maxwell1873treatise} to model the relationship between the distributions of potential and charge in a many-body system of conductors. The application of capacitance coefficients to the high-contrast, low-frequency setting considered here is qualitatively similar, in the sense that the generalised capacitance matrix can be thought of as capturing the strength of the coupling interactions between each of the inclusions. An alternative asymptotic strategy would be to use two-scale homogenisation, which has been applied to the particular problem of localised modes in systems of high-contrast inclusions by \cite{cherdantsev2009spectral, kamotski2018localized}. However, in this work we favour the convenience of the discrete approximation provided by the generalised capacitance matrix.

We will exploit the discrete asymptotic approximation given by the generalised capacitance matrix to develop a landscape theory for predicting the positions of wave localisation in these subwavelength resonant systems. We will see that, using the properties of capacitance coefficients, it is possible to prove a landscape inequality within this discrete framework. This argument is similar to the ideas of \cite{lemut2020localization}. We will see that this inequality still suffers from the same curse described above, whereby little useful information about an eigenmode's profile can be obtained from the landscape inequality if it has a relatively large eigenvalue. We deal with this by using a transformed version of the generalised capacitance matrix to define an ``upper'' landscape that captures the localisation of higher frequency modes. This exploits the ideas developed by \cite{lyra2015dual} and taked advantage of the fact that we have a discrete approximation for the continuous system. Note, however, that although these modes occur at ``higher'' frequencies than the lowest frequency modes, they are still deeply subwavelength. Hence, they can still be thought of as low-frequency modes, they just lie at the upper end of the subwavelength resonant spectrum. 

An important subtlety of the problem considered in this work is that it is three dimensional. Many of the previous applications of landscape function theory have been for one- or two-dimensional differential systems. One simple reason for this is that it is difficult to plot and compare three-dimensional data \cite{cottereau2023talk}. In our high-contrast, low-frequency setting, our asymptotic model reduces the three-dimensional wave scattering problem to a discrete eigenvalue problem, with the matrix dimensions equal to the number of scatterers. Since the three-dimensional behaviour is fully determined, at leading order, by the vector of values on the finite number of resonators, we can understand the resonant modes from this finite set of values. This provides a way to visualise the multi-dimensional data easily and facilitates the comparisons required to apply landscape theory.

Another important subtlety of three-dimensional scattering problems is that it is unclear when, in general, Anderson localisation will occur. For example, \cite{skipetrov2014absence} demonstrated that there is no Anderson localisation when light is scattered by a random three-dimensional ensemble of point scatterers. Comparing this with the varied results of other three-dimensional studies, such as \cite{aegerter2007observation, conti2008dynamic, kondov2011three, skipetrov2019search}, highlights the subtlety and difficulty of understanding Anderson localisation in three dimensions. In this work, we will study systems of finitely many resonators surrounded by free space. As discussed, the radiation conditions imposed on the far field mean that energy is allowed to escape from the system so there is no global localisation. However, we can already see from \Cref{fig:modeexample1} that this system can support strong focusing of modes.

We will begin by recalling the discrete asymptotic framework offered by the generalised capacitance matrix in \Cref{sec:prelim}. We will highlight only the details needed for this study, more comprehensive reviews can be found in \cite{ammari2021functional, ammari2022wave}. Then, in \Cref{sec:landscape} we will present the main theoretical results, which are landscape inequalities tailored to both the upper and lower parts of the subwavelength resonant spectrum. Finally, we will demonstrate this approach on several examples in \Cref{sec:examples}, including both random arrangements of resonators and perturbed periodic arrays.

\section{Setting and preliminaries} \label{sec:prelim}
\subsection{Problem statement}

This study is concerned with the scattering of scalar waves by an array of material inclusions in three dimensions. We will denote these material inclusions as $D_1,D_2,\dots,D_N$, where $N\in\mathbb{N}$ is the total number of inclusions. Since we will study the resonant properties of this system, we will refer to $D_1,\dots,D_N$ as \emph{(subwavelength) resonators} from here on. We assume that the resonators $D_1,\dots,D_N$ are bounded subsets of $\mathbb{R}^3$ with H\"older continuous boundaries. That is, the boundary of each resonator $\D_n$ is such that there is some $0<s<1$ so that $\D_n\in C^{1,s}$ (meaning it is locally the graph of a differentiable function whose derivatives are H\"older continuous with exponent $s$). We will use the notation $D$ to denote the collection of all the resonators, given by the disjoint union of the $N$ resonators
\begin{equation}
D=\bigcup_{n=1}^N D_n.
\end{equation}

We suppose that the interior of each resonator is homogeneous, such that the resonator $D_n$ contains material with constant wave speed $v_n$. Similarly, we suppose that the resonators are surrounded by a homogeneous background material with wave speed $v$. We consider the propagation of time-harmonic scalar waves with frequency $\omega$ and solve a Helmholtz problem for the pressure field $u\in H^1_\mathrm{loc}(\mathbb{R}^3)$, given by

\begin{equation} \label{eq:helmholtz_equation_3d}
\begin{cases}
\left( \Delta + \omega^2 v^{-2} \right) u = 0 & \text{in } \R^3\setminus\overline{D}, \\
\left( \Delta + \omega^2 v_n^{-2} \right) u = 0 & \text{in } D_n, \text{ for } n=1,\dots,N, \\
u_+ - u_- = 0 & \text{on } \D,\\
\delta \ddp{u}{\nu}\big|_+ -  \ddp{u}{\nu}\big|_- = 0 & \text{on } \D, \\
u^s := u - u^\mathrm{in} \text{satisfies an outgoing radiation condition} & \text{as } |x|\to\infty,
\end{cases}
\end{equation}
where $u^\mathrm{in}$ is the incoming wave. Here, $\nu$ is the outward pointing unit normal vector, $\delta>0$ is a non-dimensional parameter and the subscripts $+$ and $-$ denote the limits from outside and inside the boundary respectively. Since $\omega$ is allowed to take complex values, some care is needed to define an appropriate outgoing radiation condition. This condition is needed to specify the behaviour of the solution in the far field, such that energy is only able to radiate outwards and the problem is well posed. When $\Im \omega\geq0$, the condition is the famous Sommerfeld radiation condition, which is given by
\begin{equation} \label{eq:src3}
\lim\limits_{|x|\to\infty} |x| \left(\ddp{}{|x|}-\i \omega v^{-1}\right)u=0.
\end{equation}
In the case that $\Im\omega>0$, \eqref{eq:src3} implies that $u(x)$ decays exponentially as $|x|\to\infty$ \cite[Theorem~3.6]{colton1983integral}. Conversely, when $\Im\omega<0$, the resonant solutions that are of interest in this work are allowed to be exponentially growing for large $|x|$. In this case, one natural approach is to reduce the unbounded problem to a problem posed within a large ball $\Omega\subset\mathbb{R}^3$ containing all the resonators (\emph{i.e.} $\cup_{n=1}^N \overline{D_n}\subset \Omega$). Then, the two versions of the problem can be related using a so-called \emph{capacity operator}, as outlined in \cite[Section~3]{ammari2020perturbation}. With this reformulation, $\omega$ can be identified using the boundary value problem on $\Omega$. Alternatively, viewing \eqref{eq:helmholtz_equation_3d} as an eigenvalue problem of the form $\mathcal{L}u=\omega^2u$, resonant frequencies are poles of the resolvent $(\mathcal{L}-\omega^2)^{-1}$. This is well defined for $\omega^2$ on the real axis, thanks to the Sommerfeld radiation condition, and can be continued meromorphically into the complex plane. See \emph{e.g.} \cite{dyatlov2019mathematical, gopalakrishnan2008asymptotic} for details and examples of this process.

The parameter $\delta$ in \eqref{eq:helmholtz_equation_3d} is a non-dimensional parameter that describes the contrast between the materials inside and outside the resonators. We are interested in the high-contrast limit that is characterised by $\delta$ being very small. This regime is motivated by the phenomenon of \emph{Minnaert resonance}, which is a strongly monopolar resonance of highly contrasting material inclusions. A canonical setting for Minnaert resonance to occur is when the material inclusions are air bubbles surrounded by water \cite{minnaert1933musical, devaud2008minnaert}. In this case, $\delta$ is the ratio of the density of air and water, having a value of approximately $10^{-3}$. 

We will characterise the subwavelength resonance of this system \eqref{eq:helmholtz_equation_3d} as an asymptotic property. Firstly, we define a \emph{resonant mode} to be a non-trivial solution $u$ that exists when the incoming wave $u^\mathrm{in}$ is zero. When such a solution $u$ exists for some frequency $\omega$, we say that $\omega$ is the associated \emph{resonant frequency}. Then, we will define a \emph{subwavelength} resonant mode to be any resonant mode $u$ which is such that its resonant frequency $\omega$ depends continuously on $\delta$ and satisfies
\begin{equation} \label{eq:asymp}
\omega\to0 \quad\text{as}\quad \delta\to0.
\end{equation}
This condition will allow us to perform asymptotic analysis in the high-contrast limit $\delta\to0$ in \Cref{sec:asymptotics}. We will also assume that the wave speeds are approximately constant in the sense that $v=O(1)$, $v_n=O(1)$ and $v/v_n=O(1)$ for all $n=1,\dots,N$ as $\delta\to0$.

\subsection{Boundary integral operators}

We will use boundary integral operators to represent the solutions to \eqref{eq:helmholtz_equation_3d}. This has two main advantages. Firstly, our subsequent analysis will hold for a broad collection of different shapes of resonators (provided they have sufficiently smooth boundaries). Secondly, by taking an appropriate ansatz in terms of boundary integral operators, we can reduce the three-dimensional differential problem \eqref{eq:helmholtz_equation_3d} to a problem posed only on the boundary of the resonators $\D$. 

The key boundary integral operator that we will need is the single layer potential. This is a Green's function operator that uses the standard Helmholtz Green's function, given by
\begin{equation}
G^k(x)=-\frac{\exp(\i k |x|)}{4\pi|x|}.
\end{equation}
Then, the single layer potential is the operator $\S_{D}^k:L^2(\D)\to H^1_\mathrm{loc}(\R^3)$ given by
\begin{equation} \label{defn:singleL3}
	\S_{D}^k[\varphi](x) = \int_{\D} G^k(x-y) \varphi(y)\, \de\sigma(y),\quad x\in\R^3, \varphi\in L^2(\D).
\end{equation}
Various properties of the single layer potential and its use in scattering problems can be found in \cite{ammari2009layer, colton1983integral}. One key property that we will need is that $\S_D^0$ is invertible as a map from $L^2(\D)$ to $H^1(\D)$. The other useful fact is that the single layer potential can be used to represent a resonant mode $u$, which is a solution to \eqref{eq:helmholtz_equation_3d} in the case that $u^\mathrm{in}=0$, as
\begin{equation} \label{eq:representation}
u(x) = \begin{cases}
\S_D^{\omega/v}[\phi](x) & x\in \R^d\setminus\overline{D}, \\[0.3em]
\S_{D}^{\omega/v_n}[\psi](x) & x\in D_n,
\end{cases}
\end{equation}
where $\psi,\phi\in L^2(\D)$ are density functions that need to be found. The value of this representation is that a solution of the form \eqref{eq:representation} necessarily satisfies the Helmholtz equations and the radiation condition in problem \eqref{eq:helmholtz_equation_3d}. Hence, it remains only to find $\psi,\phi\in L^2(\D)$ such that the transmission conditions on the boundary $\D$ in \eqref{eq:helmholtz_equation_3d} are satisfied.

\subsection{Asymptotic methods} \label{sec:asymptotics}

Under the assumption of the asymptotic regime \eqref{eq:asymp}, we can make use of the well-known low-frequency asymptotic expansions of the single layer potential. In particular, we have that 
\begin{equation} \label{eq:Sasymp}
\S_D^{\omega/v}=\S_D^0+O(\omega),
\end{equation}
as $\omega\to0$, with convergence in the sense of the operator norm for bounded linear operators on $L^2(\D)$. We will not recall all the details of the asymptotic analysis here and, instead, refer the interested reader to Section~3.2.3 of \cite{ammari2022wave}. The main idea is that in the limiting case, when $\delta=\omega=0$, we have an operator whose spectrum is straightforward to understand. In particular, \eqref{eq:helmholtz_equation_3d} reduces to Laplace's equation with Neumann boundary conditions on each of the $N$ disjoint domains $D_1,\dots,D_N$. It is clear that this operator has an $N$-dimensional eigenspace (whose eigenfunctions are constant on each of the individual resonators). Then, non-zero $\delta>0$ and $\omega$ can be studied as an asymptotic perturbation of this limiting case. In particular, we can prove the existence of $N$ subwavelength resonant frequencies, see Theorem~3.2.4 of \cite{ammari2022wave}.

\begin{lemma}
A system of $N$ subwavelength resonators supports exactly $N$ subwavelength resonant frequencies with positive real parts, counted with their multiplicities (that is, a single eigenvalue should be counted once, a double eigenvalue should be counted twice, and so on).
\end{lemma}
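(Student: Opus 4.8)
The plan is to reformulate the resonance problem as a characteristic-value problem for a holomorphic operator-valued function and then count its characteristic values by a Lyapunov--Schmidt reduction together with the generalised argument principle (Gohberg--Sigal theory). First I would substitute the single-layer representation \eqref{eq:representation}, with $u^{\mathrm{in}}=0$, into the two transmission conditions on $\D$ in \eqref{eq:helmholtz_equation_3d}. Applying the jump relations for the single-layer potential and for its normal derivative, one finds that $\omega$ is a subwavelength resonant frequency exactly when
\[
\mathcal{A}(\omega,\delta)=\begin{pmatrix}\S_D^{\omega/v} & -\S_D^{\omega/v_n}\\[0.35em] \delta\big(\tfrac{1}{2}I+(\mathcal{K}_D^{\omega/v})^*\big) & \tfrac{1}{2}I-(\mathcal{K}_D^{\omega/v_n})^*\end{pmatrix}\colon L^2(\D)^2\longrightarrow H^1(\D)\times L^2(\D)
\]
has a non-trivial kernel, where $\mathcal{K}_D^k$ is the Neumann--Poincar\'e operator and the notation $\omega/v_n$ means the wavenumber $\omega/v_n$ is used on the block $\partial D_n$. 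Using \eqref{eq:Sasymp} and the analogous low-frequency expansion $(\mathcal{K}_D^k)^*=(\mathcal{K}_D^0)^*+O(k^2)$, the map $\omega\mapsto\mathcal{A}(\omega,\delta)$ is holomorphic in a neighbourhood of $\omega=0$, Fredholm of index zero, and depends continuously on $\delta$.

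I would then analyse the limiting operator. Since $\S_D^0$ is invertible from $L^2(\D)$ to $H^1(\D)$, the kernel of $\mathcal{A}(0,0)$ equals $\{(\psi,\psi):\big(\tfrac{1}{2}I-(\mathcal{K}_D^0)^*\big)[\psi]=0\}$, which is $N$-dimensional and spanned by the pairs $(\psi_n,\psi_n)$, where $\psi_n=(\S_D^0)^{-1}[\chi_{\partial D_n}]$ are the equilibrium densities; this is the boundary-integral incarnation of the $N$-dimensional Neumann eigenspace described above. Splitting off this kernel and solving the complementary equation by the implicit function theorem reduces $\mathcal{A}(\omega,\delta)[\phi,\psi]=0$, for $(\omega,\delta)$ near $(0,0)$, to a finite-dimensional equation. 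Tracking the orders carefully — the leading balance $\omega^2=O(\delta)$ comes from the bottom (Neumann-jump) row, where the $O(\delta)$ exterior term competes with the $O(\omega^2)$ term in the expansion of $(\mathcal{K}_D^{\omega/v_n})^*$ — the reduced problem becomes $\omega^2\mathbf{x}=\delta\,\mathcal{C}\mathbf{x}+O(\delta^{3/2})$, where $\mathcal{C}=\mathrm{diag}(v_n^2/|D_n|)\,C$ is the generalised capacitance matrix built from the capacitance matrix $C_{ij}=-\int_{\partial D_i}\psi_j\de\sigma$. A Dirichlet-energy (Newtonian potential) argument shows that $C$ is symmetric positive definite, so $\mathcal{C}$ is diagonalisable with $N$ positive eigenvalues $\lambda_1,\dots,\lambda_N$.

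To finish, each eigenvalue $\lambda_n$, with its multiplicity, produces a pair of resonant frequencies $\omega=\pm\sqrt{\delta\lambda_n}\,(1+O(\sqrt{\delta}))$; since the leading term is real and positive, exactly one of each pair has $\Re\omega>0$ once $\delta$ is small, giving $N$ such frequencies counted with multiplicity. Completeness — that no other subwavelength resonance exists — follows from a generalised Rouch\'e (Gohberg--Sigal) argument: $\mathcal{A}(\omega,\delta)$ is uniformly invertible for $\omega$ in a fixed punctured disc around $0$ once $\delta$ is small (so any $\omega$ obeying \eqref{eq:asymp} must lie in the small disc for small $\delta$), and the number of characteristic values inside the disc is stable under the perturbation from $\delta=0$, where $\mathcal{A}(\cdot,0)$ decouples into $N$ interior Neumann problems each with a characteristic value of multiplicity two at $\omega=0$ (the coincident pair $\pm 0$), for a total of $2N$. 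The main obstacle is the reduction step itself: pinning down the $\omega$--$\delta$ scaling so that $\mathcal{C}$ emerges at leading order, and verifying via Gohberg--Sigal that multiplicities survive the reduction and that no resonance escapes the small disc; the positive-definiteness of $C$, which makes the split ``$\Re\omega>0$ versus $\Re\omega<0$'' clean, is the remaining ingredient and is essentially classical.
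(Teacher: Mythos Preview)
Your proposal is correct and matches the approach the paper relies on. Note that the paper does not actually prove this lemma in-line: it merely sketches the idea (at $\delta=\omega=0$ the problem reduces to the Neumann Laplacian on $N$ disjoint domains with an $N$-dimensional kernel, and one then perturbs) and cites Theorem~3.2.4 of \cite{ammari2022wave} for the details. What you have written is precisely the boundary-integral Gohberg--Sigal argument that underlies that cited theorem --- the $2\times2$ operator $\mathcal{A}(\omega,\delta)$, the identification of $\ker\mathcal{A}(0,0)$ with the span of the equilibrium densities $\psi_n$, the Lyapunov--Schmidt reduction yielding $\omega^2\mathbf{x}=\delta\,\mathcal{C}\mathbf{x}+\cdots$, and the Rouch\'e-type count of $2N$ characteristic values splitting into $N$ with $\Re\omega>0$ --- so you are entirely aligned with the paper's (outsourced) proof.
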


In the $\delta\to0$ and $\omega\to0$ limit, we have an $N$-dimensional eigenspace spanned by the functions
\begin{equation}
S_j^\omega(x) = \begin{cases}
\S_D^{\omega/v}[\psi_j](x), & x\in\R^3\setminus\overline{D}, \\
\S_D^{\omega/v_n}[\psi_j](x), & x\in D_n, \ n=1,\dots,N,
\end{cases}
\end{equation}
where
\begin{equation} \label{defn:psi}
\psi_j:=(\S_D^0)^{-1}[\chi_{\D_j}]. 
\end{equation}
Here, $\chi_{\D_j}$ is the characteristic function of $\D_j$. We can use these functions to develop an asymptotic characterisation of the subwavelength resonant modes. The key quantity that emerges from the asymptotic expansion \eqref{eq:Sasymp} are \emph{capacitance coefficients}, which are real numbers $C_{ij}$ given by
\begin{equation} \label{cap}
C_{ij}:=-\int_{\D_i} \psi_j \de\sigma,
\end{equation}
for $i,j=1,\dots,N$. These are the standard capacitance coefficients, as introduced by Maxwell \cite{maxwell1873treatise}. To account for the resonators potentially having different sizes or containing different material parameters, we need to introduce the \emph{generalised capacitance coefficients} $\C_{ij}$, given by
\begin{equation} \label{GCM}
\C_{ij} := \frac{v_i^2}{|D_i|} C_{ij},
\end{equation}
where $|D_i|$ is the volume of $D_i$. It turns out that the eigenvalues and eigenvectors of the generalised capacitance matrix determine the subwavelength resonant frequencies and subwavelength resonant modes of the system of resonators, at leading order. This is made precise by the following result, that was proved in Lemma~3.2.3 and Theorem~3.2.5 of \cite{ammari2022wave}. 

\begin{lemma} \label{thm:res}
Let $\lambda^{(1)},\dots,\lambda^{(N)}$ and $\v^{(1)},\dots,\v^{(N)}$ be the eigenvalues and associated eigenvectors of the generalised capacitance matrix $\C$, defined in \eqref{GCM}. Then, as $\delta\to0$, the subwavelength resonant frequencies of a system of $N$ subwavelength resonators satisfy the asymptotic formula
\begin{equation*}
\omega^{(n)}=\sqrt{\delta \lambda^{(n)}} + O(\delta),
\end{equation*}
for each $n=1,\dots,N$. Further, the associated resonant modes  satisfy
\begin{equation}\label{modeexpansion}
u^{(n)}=\sum_{j=1}^N (\v^{(n)})_j \S_D^0[\psi_j](x)+O(\sqrt{\delta}),
\end{equation}
for each $n=1,\dots,N$.
\end{lemma}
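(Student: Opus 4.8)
The plan is to turn the transmission problem \eqref{eq:helmholtz_equation_3d} into a boundary-integral system through the ansatz \eqref{eq:representation}, isolate the degenerate leading-order operator, and then read off the eigenvalue problem for $\C$ from the Fredholm solvability condition of the first correction.

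First I would substitute \eqref{eq:representation} into the two transmission conditions on $\D$. Continuity of $u$ gives $\S_D^{\omega/v}[\phi]=\S_D^{\omega/v_n}[\psi]$ on $\D_n$, and the flux condition gives $\delta\,\partial_\nu\S_D^{\omega/v}[\phi]\big|_+=\partial_\nu\S_D^{\omega/v_n}[\psi]\big|_-$ on $\D_n$, with the $\pm$ subscripts as in the paper's convention. Using the invertibility of $\S_D^0$ and \eqref{eq:Sasymp}, the continuity equation forces $\phi=\psi+O(\omega)$; and at $\delta=\omega=0$ the flux condition reduces to $\partial_\nu\S_D^0[\psi]\big|_-=0$ on $\D$. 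Now $\S_D^0[\psi_j]$, with $\psi_j$ as in \eqref{defn:psi}, has constant Dirichlet data on each $\D_i$, hence is locally constant on $D$, so its interior Neumann trace vanishes; conversely, if $\partial_\nu\S_D^0[\varphi]\big|_-=0$ then $\S_D^0[\varphi]$ is harmonic with vanishing Neumann data on each $D_i$, so equals some constant $c_i$ there and $\varphi=\sum_i c_i\psi_i$. Thus $\psi_1,\dots,\psi_N$ form a basis of this $N$-dimensional kernel, so to leading order $\psi=\sum_{j=1}^N a_j\psi_j+O(\sqrt\delta)$ for some vector $a=(a_1,\dots,a_N)$, and the mode is asymptotically equal to the constant $a_i$ on each $D_i$.

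The vector $a$ is pinned down by integrating the flux condition over each $\D_i$. For the interior side I would apply the divergence theorem to $(\Delta+\omega^2v_i^{-2})\big(\S_D^{\omega/v_i}[\psi]\big)=0$ in $D_i$, giving $\int_{\D_i}\partial_\nu\S_D^{\omega/v_i}[\psi]\big|_-\de\sigma=-\omega^2v_i^{-2}\int_{D_i}\S_D^{\omega/v_i}[\psi]\de x=-\omega^2v_i^{-2}|D_i|\,a_i+O(\omega^2\sqrt\delta)$. For the exterior side I would use the standard jump relation $\partial_\nu\S_D^0[\varphi]\big|_+-\partial_\nu\S_D^0[\varphi]\big|_-=\varphi$, the kernel property above, and the low-frequency expansion of the Neumann trace, to get $\int_{\D_i}\partial_\nu\S_D^{\omega/v}[\phi]\big|_+\de\sigma=\sum_j a_j\int_{\D_i}\psi_j\de\sigma+O(\sqrt\delta)=-\sum_j C_{ij}a_j+O(\sqrt\delta)$ by \eqref{cap}. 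Equating the two sides, dividing by $\delta$, multiplying by $v_i^2/|D_i|$ and using \eqref{GCM} leaves $(\C a)_i=\delta^{-1}\omega^2\,a_i+O(\sqrt\delta)$ for each $i$, i.e.\ $\C a=(\delta^{-1}\omega^2)a+O(\sqrt\delta)$. Hence, as $\delta\to0$, $\delta^{-1}\omega^2$ converges to an eigenvalue $\lambda^{(n)}$ of $\C$ and $a$ to the associated eigenvector $\v^{(n)}$; since then $\omega^2=\delta\lambda^{(n)}+O(\delta^{3/2})$, this yields $\omega^{(n)}=\sqrt{\delta\lambda^{(n)}}+O(\delta)$, and substituting $\psi=\sum_j(\v^{(n)})_j\psi_j+O(\sqrt\delta)$ back into \eqref{eq:representation} and \eqref{eq:Sasymp} gives $u^{(n)}=\sum_j(\v^{(n)})_j\S_D^0[\psi_j]+O(\sqrt\delta)$, which is \eqref{modeexpansion}.

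To upgrade this heuristic into a proof with controlled remainders I would recast the whole boundary system as a single holomorphic operator-valued equation $\mathcal{A}(\omega,\delta)\big[(\phi,\psi)\big]=0$ on $L^2(\D)\times L^2(\D)$, whose value at $\omega=\delta=0$ has the $N$-dimensional kernel spanned by $(\psi_j,\psi_j)$; a Lyapunov--Schmidt projection onto that kernel reduces the resonance condition to a finite-dimensional equation whose leading part is the matrix problem above, and Taylor-expanding the coefficients of that reduction in $\omega$ and $\delta$ then produces the claimed $O(\delta)$ and $O(\sqrt\delta)$ rates. The main obstacle is exactly this step: one must keep all the $O(\cdot)$ terms uniform, in particular handle the exterior single layer potential and the outgoing radiation condition for complex $\omega$ rigorously (e.g.\ via the capacity-operator reformulation mentioned after \eqref{eq:src3}), and one must invoke the multiplicity-counting part of the generalised argument principle (Gohberg--Sigal theory) to guarantee that the $N$ subwavelength resonant frequencies supplied by the previous lemma are in bijection, with the correct multiplicities, with the $N$ eigenvalues of $\C$. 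Since this analysis is carried out in Lemma~3.2.3 and Theorem~3.2.5 of \cite{ammari2022wave}, I would cite it for the remainder bookkeeping and present only the leading-order computation above in detail.
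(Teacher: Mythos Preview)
The paper does not actually prove this lemma: it simply states the result and attributes it to Lemma~3.2.3 and Theorem~3.2.5 of \cite{ammari2022wave}. Your proposal goes further than the paper by sketching the standard boundary-integral derivation---identifying the $N$-dimensional kernel at $\omega=\delta=0$, integrating the flux condition over each $\partial D_i$ to obtain $\C a=(\delta^{-1}\omega^2)a+O(\sqrt\delta)$, and then invoking Gohberg--Sigal/Lyapunov--Schmidt for the remainder control---and this is precisely the argument carried out in the cited reference, which you also correctly invoke for the rigorous bookkeeping.
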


\begin{rmk} \label{rmk:identical}
A direct corollary of \Cref{thm:res} is that if the resonators are all identical (having the same size and shape as well as the same internal material parameters), then we only need to study the capacitance matrix $C$, as defined in \eqref{cap}. Since $C$ and $\C$ differ by a common $v_i^2/|D_i|$ factor, their eigenvalues differ by a similar factor. As a result, the expansion for the resonant frequencies takes the form
\begin{equation}
\omega^{(n)}=\sqrt{\delta\lambda^{(n)} \frac{v_1^2}{|D_1|}} + O(\delta),
\end{equation}
for each $n=1,\dots,N$, where $\lambda^{(1)},\dots,\lambda^{(N)}$ are the eigenvalues of the capacitance matrix \eqref{cap}. Crucially, however, in this special case the expansion for the resonant modes $u^{(n)}$ in \Cref{thm:res} is unchanged if $\v^{(1)},\dots,\v^{(N)}$ are the eigenvectors of the capacitance matrix. 
\end{rmk}

We will use the asymptotic expansions presented in this section to study wave localisation within a system of high-contrast resonators. In particular, the aim is to predict where the resonant modes $u_n$ are localised. Thanks to \Cref{thm:res} and \Cref{rmk:identical}, we know that the resonant modes are determined, at leading order in the asymptotic parameter $\delta$, by the eigenvectors of the (generalised) capacitance matrix. In particular, the definition of $\psi_j$ \eqref{defn:psi} means that the functions $\S_D^0[\psi_j](x)$ take the value 1 on the boundary of $D_j$ and vanish on the boundaries of all the other resonators. Hence, \eqref{modeexpansion} tells us that the (leading-order) values of a given resonant mode on each resonator are given by the entries of the corresponding eigenvector of the (generalised) capacitance matrix.

\section{Subwavelength landscape theory} \label{sec:landscape}

We wish to develop a landscape theory for predicting focusing and localisation in the subwavelength regime, using the capacitance matrix approximation presented above. That is, we wish to develop upper bounds for the resonant modes, analogous to the results of \cite{filoche2012universal}. To do so, it is important to understand some properties of the capacitance coefficients $C_{ij}$. Using some basic properties of the single layer potential, which can be found in \emph{e.g.} \cite{ammari2009layer}, we can derive an alternative representation for the capacitance coefficients, as given by the following lemma.
\begin{lemma} \label{lem:altcapdefn}
	For each $i=1,\dots,N$, define the function $V_i:\R^3\to\R$ to be the unique solution to the exterior boundary value problem
	\begin{equation*}
	\left\{
	\begin{array} {ll}
	\Delta V_i  = 0 & \mathrm{in } \ \R^3 \setminus \overline{D}, \\
	V_i=\delta_{ij}  & \mathrm{on }\ \partial D_j, \ \mathrm{ for } \ j=1,\dots,N, \\
	V_i(x)=O\left(|x|^{-1}\right) & \mathrm{as }\ |x|\to\infty,
	\end{array}
	\right.
	\end{equation*}
	where $\delta_{ij}$ is the Kronecker delta. Then, the capacitance coefficients, defined in \eqref{cap}, are given by
	\begin{equation*}
	C_{ij}=\int_{\R^3\setminus D} {\nabla V_i}\cdot \nabla V_j \de x, \quad\text{for } i,j=1,\dots,N.
	\end{equation*}
\end{lemma}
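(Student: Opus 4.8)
The plan is to connect the layer-potential definition of $C_{ij}$ in \eqref{cap} to the energy form on the exterior domain via two ingredients: an identification of $\mathcal{S}_D^0[\psi_j]$ with the harmonic extension $V_j$, and Green's identity (integration by parts). First I would observe that, by definition \eqref{defn:psi}, we have $\mathcal{S}_D^0[\psi_j] = \delta_{jk}$ on each $\D_k$, since $\psi_j = (\mathcal{S}_D^0)^{-1}[\chi_{\D_j}]$ and $\mathcal{S}_D^0$ maps $L^2(\D)\to H^1(\D)$ invertibly. The function $u_j := \mathcal{S}_D^0[\psi_j]$ is harmonic in $\R^3\setminus\overline{D}$ (the Green's function $G^0$ is harmonic away from the origin), it is continuous across $\D$, and since $\mathcal{S}_D^0[\psi](x) = O(|x|^{-1})$ as $|x|\to\infty$ for any $\psi\in L^2(\D)$, it decays at infinity. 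Hence $u_j$ restricted to $\R^3\setminus\overline{D}$ satisfies exactly the boundary value problem defining $V_j$; by uniqueness of that exterior Dirichlet problem (standard for harmonic functions with the $O(|x|^{-1})$ decay condition), $V_j = \mathcal{S}_D^0[\psi_j]$ on $\R^3\setminus\overline{D}$.

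Next I would rewrite \eqref{cap} using the jump relations for the single layer potential. The key fact is that $\psi_j$ is, up to sign, the exterior normal derivative of $V_j$: for $\mathcal{S}_D^0$ the interior and exterior normal derivatives differ by $\psi_j$ (the jump relation $\partial_\nu \mathcal{S}_D^0[\psi]|_\pm = (\mp\tfrac12 I + (\mathcal{K}_D^0)^*)[\psi]$), and since $\mathcal{S}_D^0[\psi_j]$ is constant ($=\delta_{jk}$) on each resonator its interior is harmonic with constant boundary data, so the interior solution is piecewise constant and its interior normal derivative vanishes. Therefore $\partial_\nu V_j|_+ = -\psi_j$ on $\D$ (with $\nu$ the outward normal from $D$). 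Substituting into \eqref{cap} gives
\begin{equation*}
C_{ij} = -\int_{\D_i}\psi_j\,\de\sigma = \int_{\D_i}\partial_\nu V_j\big|_+\,\de\sigma = \sum_{k=1}^N V_i\big|_{\D_k}\int_{\D_k}\partial_\nu V_j\big|_+\,\de\sigma = \int_{\D}V_i\,\partial_\nu V_j\big|_+\,\de\sigma,
\end{equation*}
where in the third step I used $V_i = \delta_{ik}$ on $\D_k$.

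Finally I would apply Green's identity on the exterior domain $\R^3\setminus\overline{D}$: since $V_i, V_j$ are harmonic there and decay like $|x|^{-1}$ with gradients decaying like $|x|^{-2}$, the boundary term at infinity vanishes (the surface integral over a large sphere of radius $R$ is $O(R^2 \cdot R^{-1} \cdot R^{-2}) = O(R^{-1})\to 0$), and the only boundary is $\D$ with outward-from-the-exterior normal $-\nu$. This yields
\begin{equation*}
\int_{\R^3\setminus D}\nabla V_i\cdot\nabla V_j\,\de x = -\int_{\D}V_i\,\partial_\nu V_j\big|_+\,\de\sigma \cdot(-1) = \int_{\D}V_i\,\partial_\nu V_j\big|_+\,\de\sigma = C_{ij},
\end{equation*}
which is the claimed formula. (Care with the orientation of the normal in this last step is what decides the overall sign; I would fix conventions so that $\nu$ points out of $D$ throughout, matching the sign in \eqref{eq:helmholtz_equation_3d}.)

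The main obstacle I anticipate is not any single hard estimate but the careful bookkeeping of signs and normal-derivative conventions: the sign in the Green's function $G^0 = -1/(4\pi|x|)$, the sign in the jump relation for $\partial_\nu\mathcal{S}_D^0$, the orientation of $\nu$, and the minus sign in the definition \eqref{cap} all have to be tracked consistently so that the final answer comes out as $+\int\nabla V_i\cdot\nabla V_j$ rather than its negative. A secondary technical point is justifying that $u_j = \mathcal{S}_D^0[\psi_j]$ genuinely lies in the right function space to apply Green's identity on the unbounded exterior domain (i.e.\ that $\nabla V_j \in L^2(\R^3\setminus D)$ and the decay is strong enough to kill the term at infinity), which follows from the $O(|x|^{-1})$ decay of the single layer potential with $L^2$ density together with elliptic regularity, but should be stated. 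Everything else is a direct application of results already quoted: invertibility of $\mathcal{S}_D^0:L^2(\D)\to H^1(\D)$, the standard mapping and jump properties of the single layer potential, and uniqueness for the exterior Dirichlet problem.
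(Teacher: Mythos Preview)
Your approach is correct and is exactly what the paper has in mind: the paper does not actually write out a proof of this lemma, merely stating that it follows from ``basic properties of the single layer potential'' in \cite{ammari2009layer}. Your argument---identify $V_j$ with $\mathcal{S}_D^0[\psi_j]$ on the exterior via uniqueness, read off $\partial_\nu V_j|_+$ from the jump relation (using that the interior solution is piecewise constant), and then integrate by parts on $\R^3\setminus\overline{D}$---is precisely the intended route.

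One concrete correction: with the paper's convention $G^0(x)=-\tfrac{1}{4\pi|x|}$ (so $\Delta G^0=\delta_0$), the jump relation reads $\partial_\nu\mathcal{S}_D^0[\psi]\big|_\pm=(\pm\tfrac12 I+(\mathcal{K}_D^0)^*)[\psi]$, not $(\mp\tfrac12 I+\dots)$. Consequently $\partial_\nu V_j|_+=+\psi_j$, and Green's identity on the exterior (with outward-from-the-exterior normal $-\nu$) gives $\int_{\R^3\setminus D}\nabla V_i\cdot\nabla V_j=-\int_{\partial D}V_i\,\partial_\nu V_j|_+=-\int_{\partial D_i}\psi_j=C_{ij}$. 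You explicitly flagged this sign bookkeeping as the main hazard, and indeed your displayed chain has two compensating sign slips; once the jump relation is corrected the argument goes through cleanly without the ad hoc $\cdot(-1)$ in your last display.
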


Given the representation from \Cref{lem:altcapdefn}, it is straightforward to prove the following useful properties of the capacitance coefficients, as discussed in \cite{diaz2011positivity}.

\begin{lemma} \label{lem:posdef}
The capacitance coefficients, defined in \eqref{cap}, satisfy $C_{ij}=C_{ji}$ and $C_{ii}>0$ for any $i,j=1,\dots,N$. Further, if $i\neq j$, then $C_{ij}\leq0$ and, finally, we have that 
$$C_{ii}>\sum_{j\neq i} |C_{ij}|,$$
for any $i=1,\dots,N$.
\end{lemma}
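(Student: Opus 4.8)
The plan is to work entirely from the variational formula of \Cref{lem:altcapdefn}, namely $C_{ij}=\int_{\R^3\setminus\overline{D}}\nabla V_i\cdot\nabla V_j\de x$, combined with the maximum principle for harmonic functions. Symmetry $C_{ij}=C_{ji}$ is immediate, since the bilinear form is symmetric. Strict positivity $C_{ii}>0$ is almost as quick: $C_{ii}=\int_{\R^3\setminus\overline{D}}|\nabla V_i|^2\de x\geq 0$, and equality would force $V_i$ to be constant on the connected exterior domain, which is impossible since $V_i=1$ on $\partial D_i$ while $V_i(x)\to 0$ as $|x|\to\infty$.

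For the remaining assertions I would first record two consequences of the maximum principle: $0\leq V_i\leq1$ on $\R^3\setminus\overline{D}$ (the decay at infinity acting as a zero boundary value there), and the fact that $W:=\sum_{k=1}^N V_k$ is harmonic on $\R^3\setminus\overline{D}$, equal to $1$ on all of $\partial D$, decaying at infinity, and hence also satisfies $0\leq W\leq1$. Next I would integrate by parts: Green's first identity on $B_R\setminus\overline{D}$, using $\Delta V_i=0$, letting $R\to\infty$ (the spherical term vanishing because $V_i=O(|x|^{-1})$ and $\nabla V_i=O(|x|^{-2})$), and inserting the boundary data of $V_j$, yields $C_{ij}=-\int_{\partial D_j}\partial_\nu V_i\de\sigma$, where $\nu$ is the unit normal pointing out of $D$. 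When $i\neq j$, the function $V_i$ attains its minimum value $0$ on $\partial D_j$ while remaining nonnegative just outside, so $\partial_\nu V_i\geq 0$ on $\partial D_j$ and hence $C_{ij}\leq 0$. Applying the same integration by parts to $\sum_{j}C_{ij}=\int_{\R^3\setminus\overline{D}}\nabla V_i\cdot\nabla W\de x$ gives $\sum_{j}C_{ij}=-\int_{\partial D_i}\partial_\nu W\de\sigma$; since $W\leq1$ with $W=1$ on $\partial D_i$, we get $\partial_\nu W\leq 0$ on $\partial D_i$, so $\sum_j C_{ij}\geq 0$, which together with $C_{ij}\leq 0$ for $j\neq i$ is exactly the non-strict diagonal-dominance bound $C_{ii}\geq\sum_{j\neq i}|C_{ij}|$.

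The one point needing a little more than bookkeeping is upgrading this to the strict inequality. Here I would use Hopf's lemma: $W$ is not locally constant (otherwise it is constant on all of $\R^3\setminus\overline{D}$ by analyticity, contradicting its decay), so along the smooth boundary $\partial D_i$, where $W$ attains its maximum value $1$, the outward normal derivative is strictly negative, $\partial_\nu W<0$, and therefore $\sum_j C_{ij}=-\int_{\partial D_i}\partial_\nu W\de\sigma>0$. Equivalently, if $\sum_j C_{ij}=0$ then $\partial_\nu W\equiv 0$ on $\partial D_i$, and $W-1$ then has vanishing Cauchy data on $\partial D_i$, so unique continuation forces $W\equiv 1$ in $\R^3\setminus\overline{D}$, again contradicting the decay. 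I expect the main obstacle to be precisely the boundary-regularity bookkeeping: justifying the integration by parts and the vanishing of the boundary term at infinity on the unbounded exterior domain, and checking that the strong maximum principle and Hopf's lemma do apply under the standing $C^{1,s}$ assumption on $\partial D$ rather than only for smooth boundaries. The algebraic skeleton of the argument, by contrast, is entirely elementary.
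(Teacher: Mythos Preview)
Your argument is correct and, in fact, supplies considerably more detail than the paper itself. The paper does not give a proof of this lemma: it simply remarks that the claimed properties are ``straightforward'' consequences of the variational representation of \Cref{lem:altcapdefn} and refers the reader to \cite{diaz2011positivity}. Your proof is precisely the standard one underlying that citation: symmetry and $C_{ii}>0$ from the bilinear form, the sign of the off-diagonal entries from the maximum principle applied to $V_i$ together with the flux representation $C_{ij}=-\int_{\partial D_j}\partial_\nu V_i\de\sigma$, and diagonal dominance from the auxiliary function $W=\sum_k V_k$, with strictness via Hopf's lemma. The technical caveat you raise---that the interior sphere condition behind the classical Hopf lemma is not automatic for $C^{1,s}$ boundaries with $s<1$---is a genuine point, though versions of Hopf's lemma valid under $C^{1,\alpha}$ regularity are available and would close the gap; the paper sidesteps this entirely by outsourcing the argument.
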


The following property follows from the results of \cite{ostrowski1937determinanten}, but we include a simple proof here for completeness. Our argument is inspired by the approach of \cite{lemut2020localization}, who used the language of comparison matrices.

\begin{lemma} \label{lem:Cinv}
The capacitance matrix is invertible and its inverse $C^{-1}$ has non-negative real entries.
\end{lemma}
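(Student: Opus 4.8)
The plan is to use the properties from Lemma \ref{lem:posdef}, which say that $C$ is symmetric, has strictly positive diagonal entries, nonpositive off-diagonal entries, and is strictly diagonally dominant (since $C_{ii} > \sum_{j\neq i}|C_{ij}|$). A matrix with nonpositive off-diagonal entries and positive diagonal is by definition a $Z$-matrix, and a strictly diagonally dominant $Z$-matrix with positive diagonal is a nonsingular $M$-matrix. The standard characterization of nonsingular $M$-matrices then gives exactly what we want: the inverse exists and has nonnegative entries. So the argument reduces to invoking (or proving from scratch) this classical fact.

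First I would establish invertibility directly: strict diagonal dominance implies $C$ is invertible by the Levy--Desplanques theorem (a quick Gershgorin-disc argument — each disc $|z - C_{ii}| \le \sum_{j\neq i}|C_{ij}| < C_{ii}$ excludes the origin, so $0$ is not an eigenvalue). Actually, since Lemma \ref{lem:altcapdefn} shows $C_{ij} = \int_{\R^3\setminus D}\nabla V_i\cdot\nabla V_j\de x$, the matrix $C$ is the Gram matrix of the linearly independent functions $\nabla V_1,\dots,\nabla V_N$ (independence follows since the $V_i$ have distinct boundary data), hence $C$ is symmetric positive definite and in particular invertible — that is perhaps the cleanest route to invertibility.

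Next, for the nonnegativity of $C^{-1}$, I would write $C = D_0 - A$ where $D_0 = \mathrm{diag}(C_{11},\dots,C_{NN})$ has positive entries and $A$ has nonnegative entries (namely $A_{ij} = |C_{ij}|$ for $i\neq j$ and $A_{ii}=0$). Then $C = D_0(I - D_0^{-1}A)$, and the strict diagonal dominance says precisely that the nonnegative matrix $B := D_0^{-1}A$ has all row sums strictly less than $1$, so its spectral radius satisfies $\rho(B) < 1$. Consequently the Neumann series $(I - B)^{-1} = \sum_{k=0}^\infty B^k$ converges, and since every term $B^k$ has nonnegative entries, so does $(I-B)^{-1}$. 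Finally $C^{-1} = (I - B)^{-1}D_0^{-1}$ is a product of two matrices with nonnegative entries, hence has nonnegative entries. (The entries are real throughout since $C$ is a real matrix, which is why the statement can safely say ``non-negative real entries.'')

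The main obstacle is essentially bookkeeping rather than conceptual: one must be careful that strict diagonal dominance of $C$ transfers correctly to the statement $\rho(D_0^{-1}A) < 1$, which uses the bound $\|B\|_\infty = \max_i \sum_{j\neq i} |C_{ij}|/C_{ii} < 1$ together with $\rho(B) \le \|B\|_\infty$. Everything else is a direct application of the Neumann series. An alternative, even shorter, packaging would be to simply cite the equivalence ``strictly diagonally dominant $Z$-matrix with positive diagonal $\implies$ nonsingular $M$-matrix $\implies$ inverse-nonnegative,'' referencing \cite{ostrowski1937determinanten} as the paper already does, but spelling out the Neumann series makes the lemma self-contained as promised.
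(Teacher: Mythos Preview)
Your argument is correct. The Neumann-series/$M$-matrix route you outline is the standard one and works exactly as you say: strict diagonal dominance gives $\|D_0^{-1}A\|_\infty<1$, hence $\rho(D_0^{-1}A)<1$, the series $\sum_{k\ge0}(D_0^{-1}A)^k$ converges entrywise nonnegatively, and $C^{-1}=(I-D_0^{-1}A)^{-1}D_0^{-1}\ge0$.

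The paper proves the same lemma by a genuinely different device. Rather than a Neumann series, it uses positive definiteness of $C$ (which follows from the diagonal dominance in \Cref{lem:posdef}) to write
\[
C^{-1}=\int_0^\infty \exp(-Ct)\,\de t,
\]
and then shows each $\exp(-Ct)$ is entrywise nonnegative by introducing the shift $L_\mu:=\mu I-C$ for $\mu>\max_n C_{nn}$, so that $L_\mu$ has nonnegative entries and $\exp(-Ct)=e^{-\mu t}\sum_{n\ge0}\tfrac{t^n}{n!}L_\mu^n\ge0$. Your proof is arguably more elementary and more self-contained (it needs only diagonal dominance, not the full positive-definite spectral calculus), and it is the textbook $M$-matrix argument. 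The paper's route, on the other hand, is chosen with an eye to what comes next: the shifted matrix $L_\mu=\mu I-C$ introduced in the proof is precisely the object used to build the ``upper'' landscape in \Cref{thm:upper}, so their proof doubles as motivation for that construction. Either argument would serve for the lemma itself.
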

\begin{proof}
The invertibility of $C$ follows immediately from \Cref{lem:posdef}, since the inequality $C_{ii}>\sum_{j\neq i} |C_{ij}|$ for any $i=1,\dots,N$ implies positive definiteness. Further, since $C$ is positive definite we have that 
\begin{equation} \label{eq:intinv}
C^{-1}=\int_0^{\infty} \exp(-Ct) \de t.
\end{equation}
Now, take some $\mu>0$ such that $\mu>\max_n C_{nn}$ then we can define $L_\mu$ as
\begin{equation} \label{defnL}
L_\mu:= \mu I -C
\end{equation}
and $L_\mu$ will have non-negative entries. As a result, we see that
\begin{equation}
\exp(-Ct)_{ij} = \exp(-\mu t) \exp(L_\mu t)_{ij}= \exp(-\mu t) \sum_{n=0}^\infty \frac{t^n}{n!} \left(L_\mu^n\right)_{ij}\geq0,
\end{equation}
for any $1\leq i,j,\leq N$ and any $t>0$. Hence, \eqref{eq:intinv} is an integral of $N\times N$ non-negative integrands, so each entry of $C^{-1}$ must be non-negative.
\end{proof}

With Lemma~\ref{lem:Cinv} in hand, we are ready to prove the main result of our landscape theory for subwavelength resonant systems.

\begin{thm} \label{thm:landscape}
Define the landscape vector $\u$ as the solution to
\begin{equation*}
C\u=J_{N,1},
\end{equation*}
where $J_{N,1}\in\R^N$ is the vector of ones. Then for any eigenpair $(\lambda,\v)\in(0,\infty)\times\R^N$, which satisfies
\begin{equation*}
C\v=\lambda \v,
\end{equation*}
it holds that
\begin{equation}\label{landineq}
\frac{|\v_i|}{\|\v\|_{\infty}}\leq \lambda \u_i,
\end{equation}
for each $i=1,\dots,N$, where $\|\v\|_{\infty}:=\max_{j=1,\dots,N}|\v_j|$.
\end{thm}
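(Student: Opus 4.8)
The plan is to reduce everything to the non-negativity of the entries of $C^{-1}$, which is exactly \Cref{lem:Cinv}. First I would observe that, since $C$ is invertible, the landscape vector is simply $\u = C^{-1}J_{N,1}$, so that its $i$th entry is the $i$th row sum of $C^{-1}$, namely $\u_i = \sum_{j=1}^N (C^{-1})_{ij}$. Because \Cref{lem:Cinv} guarantees $(C^{-1})_{ij}\geq 0$, this already shows $\u_i\geq 0$, so the inequality \eqref{landineq} is at least meaningful as a statement about non-negative quantities.

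Next, starting from $C\v=\lambda\v$ with $\lambda>0$, I would rewrite this as $\v=\lambda\, C^{-1}\v$ and read off the $i$th component,
\begin{equation*}
\v_i = \lambda\sum_{j=1}^N (C^{-1})_{ij}\,\v_j .
\end{equation*}
Taking absolute values and applying the triangle inequality, then using $(C^{-1})_{ij}\geq 0$ to move the modulus inside the sum and bounding $|\v_j|\leq\|\v\|_\infty$, gives
\begin{equation*}
|\v_i| \;\leq\; \lambda\sum_{j=1}^N (C^{-1})_{ij}\,|\v_j| \;\leq\; \lambda\,\|\v\|_\infty\sum_{j=1}^N (C^{-1})_{ij} \;=\; \lambda\,\|\v\|_\infty\,\u_i .
\end{equation*}
Dividing through by $\|\v\|_\infty$, which is strictly positive because $\v$ is a nonzero eigenvector, yields \eqref{landineq}.

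I do not expect a genuine obstacle here: the whole argument is a one-line manipulation of the eigenvalue equation combined with the sign information already established. All of the real work sits in \Cref{lem:Cinv} — the non-negativity of $C^{-1}$ — which itself rests on the diagonal dominance and sign pattern of the capacitance coefficients recorded in \Cref{lem:posdef}. The only places needing a word of care are that $\lambda\neq 0$ (so that $C^{-1}\v=\lambda^{-1}\v$ is legitimate, which is ensured by the hypothesis $\lambda\in(0,\infty)$) and that $\|\v\|_\infty>0$. One can additionally remark that the bound is tight on any index $i$ achieving $|\v_i|=\|\v\|_\infty$, where it reads $1\leq\lambda\,\u_i$; this is the form that is subsequently useful for locating the regions of focusing.
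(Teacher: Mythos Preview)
Your proof is correct and follows essentially the same approach as the paper: both use $\u_i=\sum_j (C^{-1})_{ij}$, rewrite the eigenvalue equation as $\v=\lambda C^{-1}\v$ (equivalently $\lambda^{-1}\v=C^{-1}\v$), and then apply the triangle inequality together with the non-negativity of $(C^{-1})_{ij}$ from \Cref{lem:Cinv}. Your added remarks on the positivity of $\|\v\|_\infty$ and the tightness at the maximising index are fine supplementary observations.
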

\begin{proof}
From the fact that $C\u=J_{N,1}$ we have for any $i=1,\dots,N$ that
\begin{equation}  \label{eq:proof1}
\sum_{j=1}^N (C^{-1})_{ij} = \u_i.
\end{equation}
Then, we have for any $i=1,\dots,N$ that 
\begin{equation} \label{eq:proof2}
\lambda^{-1} |\v_i| = |(C^{-1}\v)_i| \leq \sum_{j=1}^N |(C^{-1})_{ij} | |\v_j| \leq \|\v\|_{\infty}\sum_{j=1}^N |(C^{-1})_{ij} | = \|\v\|_{\infty}\sum_{j=1}^N (C^{-1})_{ij},
\end{equation}
where the final equality follows from \Cref{lem:Cinv}. Combining \eqref{eq:proof1} and \eqref{eq:proof2} gives the result.
\end{proof}

Ultimately, when it comes to predicting the shapes of resonant modes, the inequality in \Cref{thm:landscape} suffers from the same problem as described in the introduction: that when $\lambda$ is large the inequality \eqref{landineq} doesn't reveal any useful information about the shape of the eigenvector $\v$. This means that resonant modes belonging to the lowest part of the spectrum can be captured effectively but modes at the top of the subwavelength spectrum are likely to be described in less detail. However, we can take inspiration from the matrix $L_\mu$ that was defined in \eqref{defnL} to develop a specific landscape theory for the modes at the top of the subwavelength spectrum. To do so, we will need to introduce the notion of a \emph{comparison matrix}, as used previously by \emph{e.g.} \cite{lemut2020localization, ostrowski1937determinanten}. For an $N\times N$ matrix $A$, the associated comparison matrix is the $N\times N$ matrix $A^\ddag$ given by
\begin{equation} \label{compmat}
(A^\ddag)_{ij}:=\begin{cases} |A_{ii}| & \text{if } i=j, \\ -|A_{ij}| & \text{if }i\neq j.\end{cases}
\end{equation}
Using the comparison matrix of $L_\mu$, for $\mu$ sufficiently large, we will be able to define an ``upper'' landscape vector $\hat\u$ that provides a tighter bound on the eigenvalues at the top of the subwavelength spectrum. This is a variant of the high energy landscape developed by \cite{lyra2015dual} for tridiagonal discrete systems.

\begin{thm} \label{thm:upper}
Let $\mu$ be the positive constant given by
\begin{equation} \label{muchoice}
\mu:=2\max_{i=1,\dots,N}C_{ii}>0.
\end{equation}
Then, we can define the upper landscape vector $\hat\u$ as the solution to
\begin{equation*}
(\mu-C)^\ddag\hat\u=J_{N,1},
\end{equation*}
where $J_{N,1}\in\R^N$ is the vector of ones and the superscript $\ddag$ denotes the comparison matrix, defined in \eqref{compmat}. Then for any eigenpair $(\lambda,\v)\in(0,\infty)\times\R^N$, which satisfies
\begin{equation*}
C\v=\lambda \v,
\end{equation*}
it holds that
\begin{equation} \label{eq:upperland}
\frac{|\v_i|}{\|\v\|_{\infty}}\leq (\mu-\lambda) \hat\u_i,
\end{equation}
for each $i=1,\dots,N$, where $\|\v\|_{\infty}:=\max_{j=1,\dots,N}|\v_j|$.
\end{thm}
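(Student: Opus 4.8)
The plan is to mirror the proof of \Cref{thm:landscape}, but with the comparison matrix $M:=(\mu-C)^\ddag$ playing the role that $C$ played there. The first step is to rewrite the eigenvalue equation $C\v=\lambda\v$ in terms of $M$. Since $\mu I - C$ has non-negative off-diagonal entries (by the choice $\mu>\max_i C_{ii}$, which makes $L_\mu=\mu I-C$ entrywise non-negative as in \eqref{defnL}, using $C_{ij}\leq 0$ for $i\neq j$ from \Cref{lem:posdef}), the comparison matrix $M=(\mu I-C)^\ddag$ in fact coincides with $\mu I - C$ itself: the diagonal entries of $\mu I - C$ are $\mu-C_{ii}>0$ so $|(\mu I-C)_{ii}|=\mu-C_{ii}$, and the off-diagonal entries are $-C_{ij}\geq 0$ so $-|(\mu I-C)_{ij}|=C_{ij}=(\mu I-C)_{ij}$. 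Hence $M=\mu I-C$, and the eigenvalue equation becomes $M\v=(\mu-\lambda)\v$. Because $\mu=2\max_i C_{ii}$ and, by \Cref{lem:posdef}, every eigenvalue of $C$ satisfies $0<\lambda<2\max_i C_{ii}=\mu$ (using Gershgorin together with $C_{ii}>\sum_{j\neq i}|C_{ij}|$, or just $\lambda\le\|C\|\le\max_i\sum_j|C_{ij}|<2\max_iC_{ii}$), the quantity $\mu-\lambda$ is strictly positive, so dividing by it is legitimate: $M^{-1}\v=(\mu-\lambda)^{-1}\v$.

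The second step is to show $M$ is invertible with non-negative inverse, so that the argument of \Cref{thm:landscape} goes through verbatim. This is exactly the content of \Cref{lem:Cinv} applied to $M$: one checks that $M$ is a strictly diagonally dominant matrix with positive diagonal and non-positive off-diagonal entries, hence positive definite and invertible, and then repeats the resolvent/exponential-series argument $M^{-1}=\int_0^\infty e^{-Mt}\,\de t$ with $e^{-Mt}=e^{-\nu t}\sum_{n\ge 0}\frac{t^n}{n!}(\nu I-M)^n$ for $\nu$ large, noting $\nu I - M$ is entrywise non-negative. The diagonal dominance of $M$ needs a short verification: the $i$th row gives $\mu-C_{ii}-\sum_{j\neq i}|C_{ij}|$, and since $\mu\geq 2C_{ii}$ this is $\geq C_{ii}-\sum_{j\neq i}|C_{ij}|>0$ by \Cref{lem:posdef}. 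So $M$ is strictly diagonally dominant, and \Cref{lem:Cinv}'s proof applies to give $(M^{-1})_{ij}\geq 0$ for all $i,j$.

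The third step reproduces the estimate \eqref{eq:proof1}--\eqref{eq:proof2}. From $M\hat\u=J_{N,1}$ we get $\sum_{j=1}^N (M^{-1})_{ij}=\hat\u_i$ for each $i$. Then, using $M^{-1}\v=(\mu-\lambda)^{-1}\v$,
\begin{equation*}
(\mu-\lambda)^{-1}|\v_i| = |(M^{-1}\v)_i| \leq \sum_{j=1}^N |(M^{-1})_{ij}|\,|\v_j| \leq \|\v\|_\infty \sum_{j=1}^N (M^{-1})_{ij} = \|\v\|_\infty\,\hat\u_i,
\end{equation*}
where the middle equality uses non-negativity of $M^{-1}$. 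Multiplying through by $(\mu-\lambda)>0$ and dividing by $\|\v\|_\infty$ yields \eqref{eq:upperland}.

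I expect the main obstacle — or rather the only genuinely non-routine point — to be the bookkeeping in the first step: confirming that the comparison matrix $(\mu-C)^\ddag$ is literally equal to $\mu I - C$ (so that its eigenvectors are the eigenvectors of $C$ with eigenvalues $\mu-\lambda$), and checking carefully that the choice $\mu=2\max_i C_{ii}$ simultaneously guarantees (a) $\mu-\lambda>0$ for every eigenvalue $\lambda$ of $C$, and (b) strict diagonal dominance of $M$ so that \Cref{lem:Cinv} applies. Both follow cleanly from \Cref{lem:posdef}, but the factor of $2$ in \eqref{muchoice} is doing double duty and this is worth stating explicitly. Everything after that is a transcription of the proof of \Cref{thm:landscape}.
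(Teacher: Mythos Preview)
Your argument has a genuine gap in the very first step: the claim that $(\mu I-C)^\ddag=\mu I-C$ is false. For $i\neq j$ you correctly compute $(\mu I-C)_{ij}=-C_{ij}\geq 0$ and hence $((\mu I-C)^\ddag)_{ij}=-\lvert -C_{ij}\rvert=C_{ij}\leq 0$; but this is \emph{not} equal to $(\mu I-C)_{ij}=-C_{ij}$ (your final equality ``$C_{ij}=(\mu I-C)_{ij}$'' is a sign slip). A matrix equals its own comparison matrix precisely when its off-diagonal entries are non-positive; that holds for $C$ itself but not for $L_\mu=\mu I-C$, whose off-diagonal entries are non-negative. As a consequence, your identity $M\v=(\mu-\lambda)\v$ fails in general, and the estimate in the third step no longer follows.

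The paper does not try to identify $L_\mu^\ddag$ with $L_\mu$. It works with $L_\mu=\mu I-C$ directly (so that $L_\mu\v=(\mu-\lambda)\v$ genuinely holds) and then bridges to $L_\mu^\ddag$ via Ostrowski's \emph{comparison theorem}: if $A^\ddag$ is positive definite then $|(A^{-1})_{ij}|\leq((A^\ddag)^{-1})_{ij}$ entrywise. Your verification that $L_\mu^\ddag$ is strictly diagonally dominant carries over and supplies the hypothesis; the comparison theorem then gives $|(L_\mu^{-1})_{ij}|\leq((L_\mu^\ddag)^{-1})_{ij}$, and the final chain reads
\[
(\mu-\lambda)^{-1}|\v_i|=|(L_\mu^{-1}\v)_i|\leq\sum_{j}|(L_\mu^{-1})_{ij}|\,|\v_j|\leq\|\v\|_\infty\sum_{j}((L_\mu^\ddag)^{-1})_{ij}=\|\v\|_\infty\,\hat\u_i.
\]
So the missing ingredient is exactly the comparison theorem, which links the matrix $L_\mu$ (whose eigenpairs you know) to the matrix $L_\mu^\ddag$ (whose inverse defines $\hat\u$). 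Your non-negativity argument for $M^{-1}=(L_\mu^\ddag)^{-1}$ is fine and is implicitly used in the last equality above, but it cannot replace the comparison step.
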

\begin{proof}
We will use the notation $L_\mu=\mu I-C$, as was used previously in \eqref{defnL}. The choice of $\mu$ in \eqref{muchoice} means we can use \Cref{lem:posdef} to see that 
\begin{equation} \label{eq:diagdom}
(L_\mu)_{ii}\geq C_{ii}>\sum_{j\neq i} |C_{ij}|=\sum_{j\neq i} |(L_\mu)_{ij}|.
\end{equation}
Hence, we know that $L_\mu$ is positive definite and invertible. We now need to compare $L_\mu$ with its associated comparison matrix $L_\mu^\ddag$. The general result here is known as the \emph{comparison theorem}, which says that for an $N\times N$ matrix $A$, if its comparison matrix $A^\ddag$ is positive definite, then it holds that $|(A^{-1})_{ij}|\leq (A^\ddag)_{ij}$, for all $1\leq i,j\leq N$. This was first proved in \cite{ostrowski1937determinanten} (in German). An alternative proof (in English) can be found in the Supplemental Material of \cite{lemut2020localization}. In our case, $L_\mu^\ddag$ inherits the property of being diagonally dominated from \eqref{eq:diagdom}, so must be positive definite. Thus, we have that 
\begin{equation}
|(L_\mu^{-1})_{ij}|\leq ((L_\mu^\ddag)^{-1})_{ij}.
\end{equation}
Notice also that if an eigenvector satisfies $C\v=\lambda \v$, then we have that $L_\mu\v=(\mu I-C)\v=(\mu-\lambda)\v$, where the choice of $\mu$ as \eqref{muchoice} implies that $\mu-\lambda>0$. As a result, we can show that
\begin{equation}
\frac{1}{\mu-\lambda}|\v_i|=|(L_\mu^{-1}\v)_i| \leq \sum_{j=1}^N |(L_\mu^{-1})_{ij}||\v_i| \leq \|\v\|_\infty\sum_{j=1}^N ((L_\mu^\ddag)^{-1})_{ij} = \|\v\|_\infty \hat\u_i,
\end{equation}
for any $i=1,\dots,N$. 
\end{proof}

The value of the upper landscape developed in \Cref{thm:upper} is that the constant $(\mu-\lambda)$ in the inequality \eqref{eq:upperland} is now such that it is smallest when the eigenvalue $\lambda$ is largest, meaning that this will give the tightest bound on the modes at the top of the subwavelength spectrum. We will demonstrate this with some numerical examples in the following section. This result extends the work of \cite{lyra2015dual} and is qualitatively similar to the approach taken in \cite{colas2022crossover}, where they similarly used an eigenvalue shift to develop an ``optimised'' landscape function (where the optimisation was performed over the energy shift parameter). Their approach relied on using Webster’s transformation \cite{webster1919acoustical} to transform the Helmholtz equation to a Schr\"odinger problem, for which energy levels can be shifted freely.

\section{Examples} \label{sec:examples}

One of the practical challenges that makes it difficult to study wave focusing and localisation in three dimensions is the simple fact that it is hard to visualise three-dimensional functions. This is another challenge that is overcome through the use of a discrete asymptotic approximation in this work (based on boundary integral operators and the capacitance matrix), since the three-dimensional resonant modes $\psi(x)$ are fully determined (at leading order in the asymptotic parameter $\delta$) by the values of the potentials on the $N$ resonators. That is, it is sufficient to study the eigenvectors $\v^{(1)},\dots,\v^{(N)}\in\mathbb{R}^N$ of the capacitance matrix. These vectors can be easily plotted against their index. An example of this is shown in \Cref{fig:modeexample}. \Cref{fig:modeexample}(a) shows a cross section of a resonant mode that appears to be strongly focused and has been plotted in a plane on which the centres of the resonators all lie (by deisgn, see \Cref{sec:random}). \Cref{fig:modeexample}(b) shows the same mode plotted only on the resonators themselves. These leading-order constant values are given by the entries of the corresponding eigenvector of the capacitance matrix. Finally, \Cref{fig:modeexample}(c) shows the same eigenvector with the entries plotted against their index. Although the horizontal axis is non-physical is this case, the plot still contains meaningful information about the extent to which the resonant mode is focused in a region of space (as opposed to its amplitude being distributed across the resonator system).

\begin{figure}
	\centering
	\includegraphics[width=\linewidth]{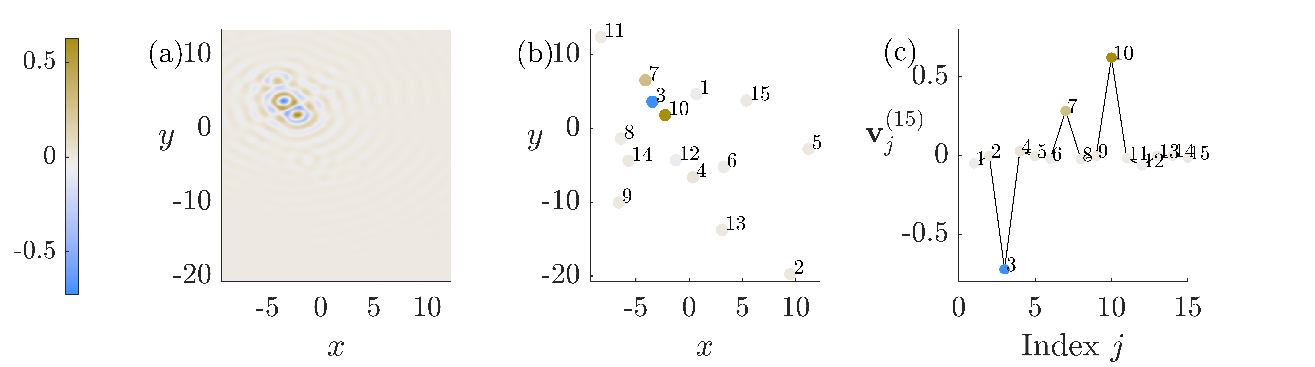}
	\caption{The focusing of a resonant mode can be characterised by the values of the field on each resonator. A system of 15 identical spherical resonators with radius 1 is modelled using the capacitance matrix. The positions are chosen to be normally distributed, with the constraints that the resonators must not overlap and their centres must lie on the $z=0$ plane. (a) The 15\textsuperscript{th} resonant mode $u^{(15)}(x,y,z)$ in the $z=0$ plane. (b) The 15\textsuperscript{th} mode plotted on each of the 15 resonators (with the value given by the eigenvector of the capacitance matrix). (c) The entries of the 15\textsuperscript{th}  eigenvector $\v^{(15)}$ of the capacitance matrix, plotted on a non-physical axis. The colour scale is the same for all three plots.} \label{fig:modeexample}
\end{figure}

The numerical computations are performed using multipole expansions. We consider spherical resonators, in which case it is convenient to express the density functions $\psi_n$ from \eqref{defn:psi} in terms of a basis consisting of spherical harmonics. Calculating the expression of $\psi_n$ within this basis relies on understanding the image of each basis function under the operator $\S_D^0$. This is explained in detail in the appendices of \cite{ammari2020topologically}. Given an expression for $\psi_n$ in terms of this basis, it is straightforward to integrate over the boundaries of the resonators to calculate the capacitance coefficients.

It is important that we consider examples where the resonators are not too close together (and certainly do not touch) as it is known that the field blows up between close-to-touching subwavelength resonators \cite{ammari2020close, lizhao}. This is not a problem for the asymptotic approximation in terms of the generalised capacitance matrix (in fact, the behaviour of the capacitance coefficients of close-to-touching spheres is well known \cite{lekner2011capacitance}), nor does it present issues for the landscape theory. In fact, it is likely to be a mechanism for strongly focused fields as it allows the otherwise monopolar scatterers to exhibit dipolar behaviour (\emph{cf.} the mechanism demonstrated in \cite{marti2021dipolar} for elastic plates). However, close-to-touching resonators pose challenges for numerical computations, as unfeasibly large numbers of basis functions would be needed to handle the problem.

\subsection{Random systems} \label{sec:random}

A first example is shown in \Cref{fig:modeexample}, in which 15 identical spherical resonators are distributed randomly on a plane. Since the resonators are identical, it is sufficient to study the eigenvectors of the capacitance matrix (see \Cref{rmk:identical}). The resonators all have unit radius and their centres that are chosen to lie on the plane $z=0$. The $x$ and $y$ coordinates were drawn from a normal distribution with mean zero and variance 100. The distribution of the resonators is shown in \Cref{fig:modeexample}(b). In this system, the 15\textsuperscript{th} subwavelength resonant mode is strongly focused. \Cref{fig:modeexample}(a) shows this resonant mode in the $z=0$ plane. The colour of the circles in \Cref{fig:modeexample}(b) indicates the value of the mode on each resonator. Finally, \Cref{fig:modeexample}(c) shows the corresponding eigenvector of the capacitance matrix. Although the numbering of the resonators is completely arbitrary, the focusing in a neighbourhood of resonators 3, 7 and 10 is clear (and would similarly be so if a different numbering scheme was chosen).

The eigenvectors of the same system is shown in \Cref{fig:2drandom}, where all 15 eigenvectors of the capacitance matrix are compared to the landsapce and the upper landscape. We can see that the landscape (from \Cref{thm:landscape}) accurately reproduces the profile of the first few eigenvectors but struggles to give any meaningful information beyond this. Conversely, the modes at the top of the subwavelength spectrum can be seen to be strongly focused and their shape is predicted well by the upper landscape (from \Cref{thm:upper}).

\begin{figure}
	\centering
	\includegraphics[width=\linewidth]{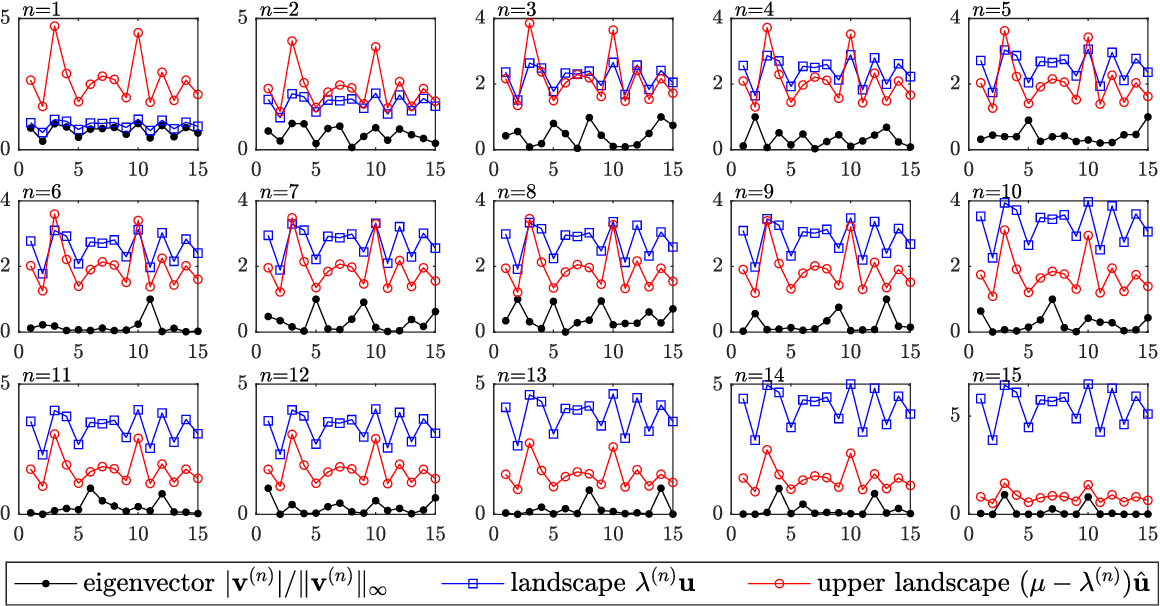}
	\caption{The shapes of the eigenvectors of the capacitance matrix can be predicted by the landscape and the upper landscape. The 15 eigenvectors $\v^{(1)},\dots,\v^{(N)}$ corresponding to a random arrangement of 15 subwavelength resonators are shown, in the same arrangement as was depicted in \Cref{fig:modeexample}. In each plot, the absolute value of the eigenvector is shown, normalised to have maximum equal to 1. The upper bounds given by the landscape $\lambda^{(n)}\u$ and the upper landscape $(\mu-\lambda^{(n)})\hat\u$ are also shown in each case.} \label{fig:2drandom}
\end{figure}

We can also use this approach to study examples where the resonators are free to take random positions in three-dimensional space. For example, let us consider a system of 20 identical spherical resonators (with unit radius) that are distributed uniformly within a $20\times20\times20$ cube. In this case, since the resonators are not constrained to a plane, as in the previous example, we elect not to plot the full resonant mode $u^{(n)}$ and directly opt to plot the eigenvectors of the capacitance matrix on a non-physical axis. Five of the eigenvectors for a realisation of this system are shown in \Cref{fig:3drandom}. We choose the first two modes, for which the landscape from \Cref{thm:landscape} predicts the shape, as well as the top three modes, for which the upper landscape from \Cref{thm:upper} is more revealing.

\begin{figure}
	\centering
	\includegraphics[width=\linewidth]{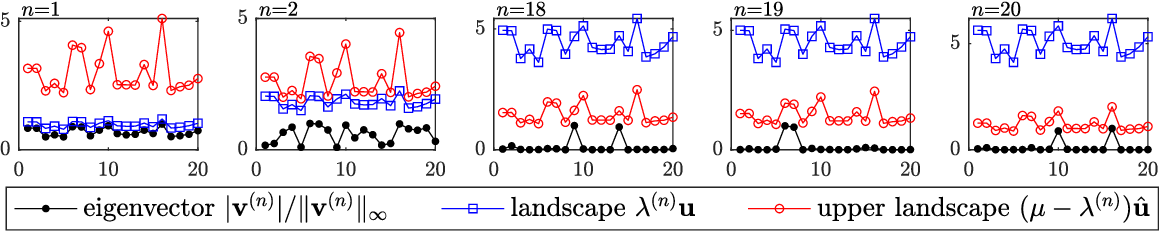}
	\caption{The discrete asymptotic approximation used in this work means the focusing and localisation of resonant modes in three-dimensional systems can be characterised concisely. Here, 20 identical spherical resonators are positioned with centres drawn from a uniform distribution on a $20\times20\times20$ cube. Five of the eigenvectors $\v^{(n)}$ are chosen to be plotted, with each being shown alongside both the landscape $\lambda^{(n)}\u$ and the upper landscape $(\mu-\lambda^{(n)})\hat\u$.} \label{fig:3drandom}
\end{figure}

\begin{rmk}
It is well known that the localisation of eigenvectors of banded random matrices is related to their bandwidth. In particular, eigenvectors are typically localised when the bandwidth is less than $N^{1/2-\epsilon}$ and delocalised when it is greater than $N^{1/2+\epsilon}$ \cite{casati1990scaling}. With this in mind, it is interesting to note that while the capacitance matrix is not banded, its entries decay approximately in proportion to $|i-j|^{-1}$. In particular, \cite[Lemma~4.3]{ammari2020topologically} shows that $C_{ij}$ is approximately inversely proportional to the distance between the centres of resonators $i$ and $j$. While this means that the capacitance matrix is somewhat reminiscent of a banded matrix, this decay is not fast enough for it to behave like a banded matrix with small bandwidth. This is one explanation for why some of the eigenvectors shown in Figures~\ref{fig:2drandom} and~\ref{fig:3drandom} are delocalised. Furthermore, it has been observed that truncating the capacitance matrix to give a banded matrix alters the fundamental physics of the model and gives a poor approximation of the physical system (see \emph{e.g.} \cite[Section~4.2.2.]{ammari2020topologically}).
\end{rmk}

\subsection{Periodic systems with defects} \label{sec:periodic}

Typically, periodic systems (including periodic arrangements of subwavelength resonators) do not give rise to any interesting wave focusing or localisation effects \cite{brillouin1953wave}. Instead, they typically support purely propagating modes (often known as \emph{Bloch} modes) with any insulating properties arising due to spectral gaps. However, if perturbations are made to a periodic material, then this will often lead to the creation of resonant modes that are localised to region of the defect or interface that has been created. This is a popular strategy for designing waveguides and related problems have been studied extensively. This includes systems of subwavelength resonators; several different examples of this are surveyed in \cite{ammari2021functional} and the references therein.

A simple example of a way to perturb a periodic system to create a localised resonant mode is to perturb the material parameters on one of the resonators. This will cause one of the subwavelength resonant modes to be localised in a region of the perturbed resonator. In this case, we will need to use the generalised capacitance matrix to capture the subwavelength behaviour of the system. This setting was studied in detail in \cite{ammari2022anderson}, where a formula was derived for the eigenvalue of the generalised capacitance matrix corresponding to the localised eigenvector. It shows that the localised eigenvector is perturbed away from the spectral band of propagating Bloch modes. This localised mode will exist for any positive perturbation of the material parameter, with the exponential localisation becoming increasingly strong as the perturbation increases.

An example of this system is shown in \Cref{fig:pointdefect} where we have studied identically sized spherical resonators (with unit radius) distributed on a square lattice. Within the central resonator, the wave speed $v_i$ is chosen to be double that in the other resonators. This perturbation is chosen such to be sufficiently large that the mode decays observably within the length of the finite sized array depicted here. In this case, the localised mode appears at the top of the subwavelength spectrum. Hence, we \emph{a priori} expect the upper landscape to be most useful for predicting its shape. The eigenvector is shown as it is distributed in space over the resonators in \Cref{fig:pointdefect}(a) and it is shown alongside both the landscape and the upper landscape in  \Cref{fig:pointdefect}(b). Clearly, the landscape is not useful, so a zoomed plot showing just the mode and the upper landscape can be found in \Cref{fig:pointdefect}(c). The upper landscape unambiguously predicts the localisation on the central resonator.

\begin{figure}
	\centering
	\includegraphics[width=0.9\linewidth]{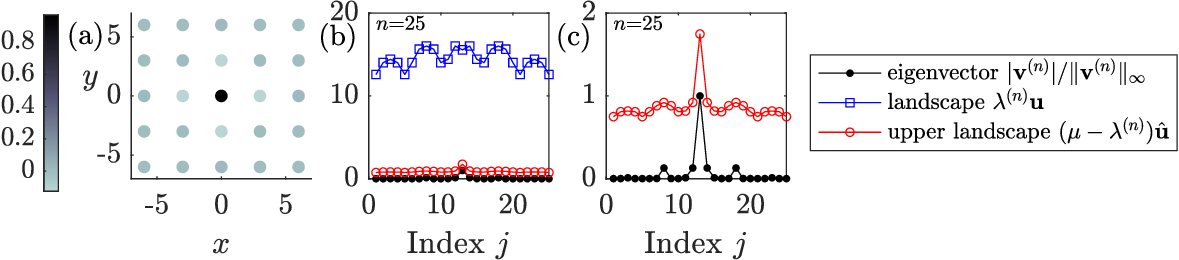}
	\caption{A localised resonant mode can be created by introducing a defect to a periodic arrangement of resonators. 25 spherical resonators are arranged in a square lattice and the wave speed on the central resonator is perturbed to be double that on the other resonators. (a) The value of the mode on each of the 25 resonators. (b)~The profile of the mode compared to both the landscape $\lambda^{(n)}\u$ and the upper landscape $(\mu-\lambda^{(n)})\hat\u$. (c)~The mode compared to just the  upper landscape $(\mu-\lambda^{(n)})\hat\u$, which gives a more insightful prediction of the mode's profile (as would be expected, for this high-frequency subwavelength resonant mode).} \label{fig:pointdefect}
\end{figure}

\section{Concluding remarks}

We have shown that landscape theory can be applied directly to the generalised capacitance matrix, which is used here as a discrete asymptotic model for the subwavelength resonance of high-contrast heterogeneous systems. Using the properties of the capacitance matrix, we were able to obtain simple direct proofs of the landscape inequality (\Cref{thm:landscape}). We also used a transformed version of the capacitance matrix to obtain an ``upper'' landscape, that can describe the top of the subwavelength spectrum (\Cref{thm:upper}). We demonstrated this approach on several examples, including both random arrangements of resonators and perturbed periodic arrays. The discrete asymptotic approximation at the centre of this work means that three-dimensional focusing and localisation problems could be handled and visualised concisely.

The value of landscape theory is particularly apparent when dealing with large systems of many resonators. In this case, substantial computational expense would be required to compute all the eigenvectors of the capacitance matrix and check where they are localised or focused (if at all). Conversely, computing the landscape and upper landscape requires just two matrix equations to be solved and gives direct insight into where energy is likely to be focused within the system.

The two versions of the landscape inequality presented in this work mean that the profile of resonant modes occurring at either end of the subwavelength spectrum can be captured. However, this theory is still less revealing for resonant modes whose resonant frequency falls in the centre of the subwavelength spectrum, especially for very large systems of many resonators. There are several notable examples for which the important localised modes appear in the middle of the subwavelength spectrum, such as the topologically protected modes that are widely used in waveguide design \cite{ammari2020topologically}. Predicting the localisation of waves in quasicrystalline wave systems is a similarly challenging problem. While landscape functions have been applied to quasicrystalline Hamiltonian systems \cite{rontgen2019local}, the localised modes typically appear in the centre of the spectrum (within one of the fractal collection of spectral gaps \cite{ni2019observation}), so suffer from the issue of landscape bounds not being tight enough to reveal useful information.

While understanding wave propagation in random and perturbed periodic media is interesting in its own right and has significant applications (such as to imaging, see \emph{e.g.} \cite{borcea2002imaging}, as well as waveguide design), it also has deep implications for the field of metamaterials and wave guides. When any micro-structured wave control device is manufactured, it will inevitably have some random errors and imperfections introduced. While small random perturbations have minimal effect on subwavelength devices \cite{davies2022robustness}, when these imperfections are sufficiently large it is possible that they will hinder the device's function. It is not yet clear how landscape theory can be used to understand this breakdown in function and, by extension, be used to design devices that are robust with respect to imperfections. However, this is a promising direction for future work.

\section*{Funding}

The work of BD was funded by the Engineering and Physical Sciences Research Council through a fellowship with grant number EP/X027422/1.

\section*{Data availability}

The code used to produce the numerical examples presented in this work is available for download at \url{https://doi.org/10.5281/zenodo.8134312}. No other datasets were generated during this study.

\section*{Conflicts of interest}

The authors declare that they have no conflicts of interest.

\bibliographystyle{abbrv}
\bibliography{references}{}
\end{document}